\documentclass[]{amsart}
\usepackage[utf8]{inputenc}
\usepackage[mathscr]{eucal} 
\usepackage{stmaryrd} 
\usepackage{amsmath,amsthm,amsfonts,amssymb,amscd} 
\usepackage[dvipsnames]{xcolor} 
\usepackage{xspace} 
\usepackage{fancyhdr} 
\usepackage{graphicx} 
\usepackage{listings} 
\usepackage[]{hyperref} 
\usepackage{enumitem} 
\usepackage{relsize} 
\usepackage{tikz-cd} 
\usepackage{mdwlist} 
\usepackage{multicol} 
\usepackage{float} 
\usepackage{adjustbox} 
\usepackage{tikz} 
\usepackage[noabbrev,nameinlink]{cleveref} 
\Crefformat{section}{#2\S#1#3}
\usepackage{setspace} 
\usepackage{bbm} 
\usepackage{mathtools}
\usepackage[all]{xy}
\usetikzlibrary{matrix, calc, arrows}

\hypersetup{
	colorlinks=true, 
	linktoc=all,     
	linkcolor=Brown,citecolor=Brown,urlcolor=MidnightBlue,  
}
\setlist[enumerate]{label=(\alph*)} 
\usetikzlibrary{graphs,decorations.pathmorphing,decorations.markings}
\tikzcdset{scale cd/.style={every label/.append style={scale=#1},
		cells={nodes={scale=#1}}}}

\newcommand{\Hom}{\operatorname{Hom}}

\newcommand{\inv}{^{-1}}

\newcommand{\op}{^{\operatorname{op}}}
\newcommand{\Res}{\operatorname{Res}}
\newcommand{\Ind}{\operatorname{Ind}}
\newcommand{\Inf}{\operatorname{Inf}}
\newcommand{\Def}{\operatorname{Def}}
\newcommand{\Iso}{\operatorname{Iso}}
\newcommand{\id}{\operatorname{id}}

\newcommand{\Pic}{\operatorname{Pic}}

\newcommand{\on}[1]{\operatorname{#1}}

\newcommand{\calC}{\mathcal{C}}
\newcommand{\calD}{\mathcal{D}}

\newcommand{\calF}{\mathcal{F}}

\newcommand{\catK}{\mathscr{K}}

\newcommand{\bbF}{\mathbb{F}}

\newcommand{\bbQ}{\mathbb{Q}}

\newcommand{\bbZ}{\mathbb{Z}}

\makeatletter
\newcommand*{\doublerightarrow}[2]{\mathrel{
		\settowidth{\@tempdima}{$\scriptstyle#1$}
		\settowidth{\@tempdimb}{$\scriptstyle#2$}
		\ifdim\@tempdimb>\@tempdima \@tempdima=\@tempdimb\fi
		\mathop{\vcenter{
				\offinterlineskip\ialign{\hbox to\dimexpr\@tempdima+1em{##}\cr
					\rightarrowfill\cr\noalign{\kern.5ex}
					\rightarrowfill\cr}}}\limits^{\!#1}_{\!#2}}}
\newcommand*{\triplerightarrow}[1]{\mathrel{
		\settowidth{\@tempdima}{$\scriptstyle#1$}
		\mathop{\vcenter{
				\offinterlineskip\ialign{\hbox to\dimexpr\@tempdima+1em{##}\cr
					\rightarrowfill\cr\noalign{\kern.5ex}
					\rightarrowfill\cr\noalign{\kern.5ex}
					\rightarrowfill\cr}}}\limits^{\!#1}}}
\makeatother

\newcommand{\RO}{\on{RO}}
\newcommand{\CF}{\on{CF}}
\newcommand{\sgn}{\on{sgn}}

\newtheorem{theorem}{Theorem}[section]

\newtheorem{proposition}[theorem]{Proposition}
\newtheorem{corollary}[theorem]{Corollary}

\newtheorem*{theorem*}{Theorem}

\theoremstyle{remark}

\newtheorem{remark}[theorem]{Remark}
\newtheorem{example}[theorem]{Example}

\newtheorem{notation}[theorem]{Notation}

\theoremstyle{definition}
\newtheorem{definition}[theorem]{Definition}

\makeatletter
\newcommand{\xhookdoubleheadrightarrow}[2][]{%
	\lhook\joinrel
	\ext@arrow 0359\rightarrowfill@ {#1}{#2}%
	\mathrel{\mspace{-15mu}}\rightarrow
}
\makeatother

\newtheorem{innercustomthm}{Theorem}
\newenvironment{customthm}[1]
{\renewcommand\theinnercustomthm{#1}\innercustomthm}
{\endinnercustomthm}

\AddToHook{env/corollary/begin}{\crefalias{theorem}{corollary}}
\AddToHook{env/proposition/begin}{\crefalias{theorem}{proposition}}
\AddToHook{env/lemma/begin}{\crefalias{theorem}{lemma}}
\AddToHook{env/definition/begin}{\crefalias{theorem}{definition}}
\AddToHook{env/example/begin}{\crefalias{theorem}{example}}
\AddToHook{env/remark/begin}{\crefalias{theorem}{remark}}
\AddToHook{env/observation/begin}{\crefalias{theorem}{observation}}
\AddToHook{env/construction/begin}{\crefalias{theorem}{construction}}

\begin{document}
	\title{The fusion-stable tom Dieck homomorphism}
	\author{Sam K. Miller}
	\address{Department of Mathematics, University of Georgia, Athens GA 30602, United States of America} 
	\email{sam.miller@uga.edu} 
	\subjclass[2020]{19A22 } 
	\keywords{tom Dieck homomorphism, fusion system, real representation, characteristic idempotent, Borel--Smith function, endotrivial complex} 
	\begin{abstract}
		Tornehave and Yal\c{c}in proved that the tom Dieck homomorphism, which sends a virtual real representation to a unit of the Burnside ring, is surjective for any $p$-group $S$. We prove that this homomorphism, and the sign homomorphism it factors through, remain surjective when restricted to fusion-stable subgroups associated to a saturated fusion system on $S$. As a corollary, we close the main question posed by Mazza--Miller by showing that given a field $k$ of positive characteristic, the Lefschetz homomorphism from the Picard group of the bounded homotopy category of $p$-permutation modules to the unit group of its Grothendieck ring is surjective for all finite groups if and only if $k = \bbF_2$. 
	\end{abstract}
	
	\maketitle

	\section*{Introduction}
	
	Let $p$ be any prime and $S$ be a finite $p$-group. We consider some classical homomorphisms arising in equivariant topology and representation theory: the \emph{dimension homomorphism}: 
	\[\dim_S\colon \on{RO}(S) \to \on{CF}_b(S),\quad V \mapsto \big(P \mapsto \dim V^P\big)\] and the \emph{sign homomorphism} 
	\[\on{sgn}_S\colon \on{CF}_b(S)\to A(S)^\times,\quad f \mapsto \big(P \mapsto (-1)^{f(P)}\big).\] The composition of these homomorphisms is the so-called \emph{tom Dieck homomorphism} $\Theta_S$. Here, $\on{RO}(S)$ is the Grothendieck group of real representations of $S$, $\on{CF}_b(S)$ is the group of \emph{Borel-Smith superclass functions} of $S$, $A(S)$ denotes the \emph{Burnside ring} of $S$, and we regard $A(S)^\times$ as a subgroup of $\on{CF}(S)^\times$ via the injective mark homomorphism 
	\[m_S\colon B(S) \to \on{CF}(S), \quad X \mapsto \big(P \mapsto |X^P|\big).\] 
	
	Both the dimension and sign homomorphisms are well-defined and surjective.  Dotzel--Hamrick \cite{DH81} verified surjectivity of the dimension homomorphism, and Tornehave \cite{Tor84} verified in an unpublished manuscript that the tom Dieck homomorphism is surjective via a topological argument. Yal\c{c}in \cite{Y05} later verified Tornehave's result algebraically via the theory of rational $p$-biset functors.  
	
	If we remove the restriction that $S$ is a $p$-group, then these homomorphisms are known to no longer be surjective, see e.g. \cite[Remark 5.3]{Bar11}. However, one can consider the question $p$-locally instead by considering the fusion system $\calF_S(G)$ associated to a finite group $G$ with $p$-Sylow $S$. More generally, we may take $\calF$ to be any fusion system on a $p$-group $S$, and one can consider the subgroups of $\calF$-stable elements $\on{RO}(\calF)$, $\on{CF}_b(\calF)$, and $A(\calF)^\times$, respectively. We obtain corresponding $\calF$-stable group homomorphisms:
	\[\dim_\calF\colon \on{RO}(\calF) \to \on{CF}_b(\calF) \text{,  } \sgn_\calF\colon \on{CF}_b(\calF) \to A(\calF)^\times \text{,  and  } \Theta_\calF := \sgn_\calF\circ \dim_\calF.\]
	Apriori it is not clear these homomorphisms remain surjective. In fact, they do not. Reeh--Yal\c{c}in previously considered this question for the dimension homomorphism, and verified that its image consists of the $\calF$-stable elements satisfying Bauer's \emph{Artin condition} \cite{RY18}. 
	
	We show in this short note that on the other hand, the $\calF$-restricted tom Dieck homomorphism and $\calF$-restricted sign homomorphism remain surjective. 
	
	\begin{customthm}{A}\label{thm:A}(\Cref{thm:main})
		Let $\calF$ be a saturated fusion system over a $p$-group $S$. The $\calF$-restricted sign homomorphism \[\sgn_\calF\colon \CF_b(\calF) \to A(\calF)^\times\] and the $\calF$-restricted tom Dieck homomorphism \[\Theta_\calF\colon \RO(\calF) \to A(\calF)^\times\] are well-defined and surjective.
	\end{customthm}
	
	\Cref{thm:A} answers a question posed by Mazza and the author, \cite[Remark 4.2]{MM26}. It also enables us to, in a sense, close the main question of \cite{MM26} regarding the Euler characteristic of an endotrivial complex, i.e., an invertible objects in the bounded homotopy category of $p$-permutation modules over the field $\bbF_2$. See \Cref{rmk:endotriv_overview} for details.
	
	\begin{customthm}{B}(\Cref{cor:thmB})
		Let $G$ be a finite group. The Lefschetz homomorphism \[\Lambda\colon \Pic(\catK(G;\bbF_2)) \to O(T(\bbF_2G)), \quad C \mapsto \sum_{i \in \bbZ} (-1)^i[C_i]\] is surjective. In particular, the following holds: if $k$ is field of characteristic $p$, then the Lefschetz homomorphism is surjective for every finite group if and only if $k = \bbF_2$.
	\end{customthm}
	
	There are two key mechanisms which go into the proof of \Cref{thm:A}. First, we use the existence of so-called \emph{characteristic idempotents} for saturated fusion systems. These are bifree idempotents in the $p$-localized \emph{biset Burnside ring}. The idea of relating fusion systems to double Burnside rings and characteristic elements reportedly goes back to Linckelmann--Webb in an unpublished manuscript, and the existence and uniqueness of characteristic idempotents of saturated fusion systems is due to Ragnarsson \cite{Rag06} and Ragnarsson--Stancu \cite{RS13}. 
	Second, $\on{RO}(-), \on{CF}_b(-),$ and $\on{B}(-)^\times$ are all \emph{$p$-biset functors} \cite{Bou10}, therefore there is a natural action of the $p$-localized double Burnside ring on $p$-localizations of all associated groups. In our setting, the action of a characteristic idempotent $p$-locally $\calF$-stabilizes an element of $\on{RO}(G)_{(p)}$, $\on{CF}_b(G)_{(p)}$, and $A(G)^\times_{(p)}$ by results of Rees--Yal\c{c}in \cite{RY18}, and one can lift the stabilization to the global level here. 
	
	The paper is organized as follows: Section 1 concerns generalities on biset functors and the groups and homomorphisms in question for this apper, Section 2 briefly reviews fusion systems and characteristic idempotents, and Section 3 proves the main results. 
	
	\subsection*{Acknowledgments and statement on generative AI use} I thank Robert Boltje for first introducing me to biset functors, as well as plenty of other topics arising in this paper. Moreover, he suggested during my Ph.D. that I consider looking at characteristic idempotents to answer the questions here. 
	
	I recently was testing the capabilities of a generative AI model, and after some prodding, it also attempted to use characteristic classes to answer this question, which reignited my motivation to close the problem. However, the proofs and paper are produced entirely by me, and I take full responsibility for their content. 
	
	\section{Biset functors}
	
	We first review basics of biset functors, following Bouc \cite{Bou10}. We refer the reader to \cite[Chapters 2, 3]{Bou10} for details. 
	
	\begin{definition}
		Let $G,H, K$ be finite groups. We let $A(G)$ denote the \emph{Burnside ring}, i.e., the Grothendieck ring on the category of left $G$-sets. We let $A(H,G)$ denote the \emph{biset Burnside group}, which is nothing more than $A(H \times G\op)$. We have a composition law \[\circ\colon A(K,H) \times A(H,G) \to A(K,G)\] such that $[V]\circ [U] = [V\times_H U]$; this endows $A(G,G)$ with noncommutative ring structure with unit the $(G,G)$-biset $G$. Given a $(H,G)$-biset $V$, its \emph{opposite $(G,H)$-biset} $V\op$ is $V$ as a set and has $(G,H)$-action given by $g\cdot v\cdot h := h\inv vg\inv$.
		
		We have an injective ring homomorphism, the \emph{mark homomorphism} \[m\colon A(G) \to \on{CF}(G),\quad [X] \mapsto \big(H \mapsto |X^H|\big).\] The values of the resulting superclass function are referred to as the \emph{marks} of the (virtual) $G$-set. Since the mark homomorphism is injective, elements of $A(G)$ or $A(H,G)$ may often be described instead via their marks instead, i.e., as superclass functions. The mark homomorphism is in general non-surjective, so there is no guarantee a superclass function corresponds to a virtual $G$-set. 
	\end{definition}
	
	\begin{remark}
		We list five fundamental examples of bisets.
		\begin{itemize}
			\item If $H \leq G$, then $G$ is the \emph{restriction} $(H,G)$-biset and is denoted by $\Res^G_H$;
			\item Similarly, $G$ is the \emph{induction} $(G,H)$-biset and is denoted by $\Ind^G_H$;
			\item If $N \trianglelefteq G$, then $G/N$ is the \emph{inflation} $(G,G/N)$-biset and is denoted by $\Inf^G_{G/N}$;
			\item Similarly, $G/N$ is the \emph{deflation} $(G/N, G)$-biset and is denoted by $\Def^G_{G/N}$;
			\item If $\varphi\colon G\to H$ is a group isomorphism, then $H$ is the \emph{isomorphism} $(H,G)$-biset and is denoted by $\Iso(f)$.
		\end{itemize}
		In fact, every transitive biset can be written as a composition of these five bisets. See \cite[Lemma 2.3.26]{Bou10} for details. Moreover, any transitive \emph{bifree} (left and right free) biset can be written as a composition of only induction, restriction, and isomorphism.  
	\end{remark}
	
	\begin{example}\label{ex:twisted_diag}
		Let $G,H$ be finite groups and let $K \leq G$ and let $\varphi\colon K \to H$ be an injective group homomorphism. Set $\Delta(K, \varphi) := \{(\varphi(k),k) \mid k \in K\}$, a \emph{twisted diagonal} subgroup of $H \times G$. We denote the transitive, bifree $(H,G)$-biset $H \times G/\Delta_\varphi(K)$ by $[K,\varphi]^H_G$. We have an equality \[[K,\varphi]^H_G = \Ind^H_{\varphi(K)}\circ \Iso(\varphi) \circ \Res^G_K.\]
	\end{example}
	
	\begin{definition}
		\begin{enumerate}
			\item The \emph{biset category} $\calC$ of finite groups is the category defined as follows:
			\begin{itemize}
				\item The objects of $\calC$ are finite groups;
				\item If $G,H$ are finite groups, then $\Hom_\calC(G,H) = A(H,G)$;
				\item If $G,H,K$ are finite groups, then composition $v\circ u$ of morphisms $v \in \Hom_\calC(H,K)$ and $u\in \Hom_\calC(G,H)$ is equal to $v \circ u$;
				\item The identity morphism of $G \in \calC$ is the virtual biset $[G]$.
			\end{itemize}
			Given a commutative ring $R$, there is a corresponding $R$-linear biset category $R\calC := R \otimes_\bbZ \calC$. 
			\item Let $\calD \subseteq \calC$ be a preadditive subcategory. A \emph{biset functor} on $\calD$ is with values in $\mathsf{Mod}(R)$ is an $R$-linear functor $R\calD \to \mathsf{Mod}(R)$. Given a $R$-linear biset functor $F$ and biset $U\in A(H,G)$, we write $T_U\colon F(G) \to F(H)$ to denote the corresponding $R$-module homomorphism. A morphism of $R$-linear biset functors is simply a natural transformation. We say a morphism is \emph{injective} (resp. \emph{surjective}) if it is pointwise injective (resp. surjective). 
			
			\item We let $\calC_p$ denote the full subcategory of $\calC$ consisting of all $p$-groups, and call a biset functor on $\calC_p$ a \emph{$p$-biset functor.} 
			
		\end{enumerate}
	\end{definition}
	
	The theory of \emph{rational} $p$-biset functors, those satisfying conditions similar to those of the rational representation ring $p$-biset functor $R_\bbQ(-)$, as led to significant developments in group representation theory, such as the determination of the Dade group of a $p$-group \cite{Bou06} and the unit group of the Burnside ring of a $p$-group \cite{Y05, Bou07}. 
	
	\begin{definition}
		If $F$ is a $R$-linear biset functor on $\calC$ or $\calC_p$, the \emph{dual biset functor} $F^*$is the biset functor defined by $F^*(G) = \Hom_\bbZ(F(G), R)$ for an appropriate finite group $G$, and $F^*(U) = {}^tF(U\op)$ for any $(H,G)$-biset $U$, where ${}^tF(U\op)$ denotes the transposed map of $F(U\op)$. 
	\end{definition}
	
	\begin{remark}
		Because every transitive biset can be written as a composition of the five fundamental bisets (induction, restriction, inflation, deflation, isomorphism), specifying a $R$-linear biset functor on $\calC$ amounts to specifying for every finite group $G \in \calC$ a $R$-module, with $R$-module homomorphisms corresponding to induction, restriction, inflation, deflation, and isomorphism, satisfying certain compatibility conditions listed in \cite[Subsection 1.1.3]{Bou10}. 
	\end{remark}
	
	\subsection{The main examples} We describe a few biset functors, the main figures of the paper. 
	
	\begin{definition}
		We denote by $\on{RO}(G)$ the real representation group of $G$. Then the assignment $G \mapsto \on{RO}(G)$ induces a biset functor $\RO(-)$, with the induction biset corresponding to usual (co)induction of finite groups, the restriction biset corresponding to restriction of representations, the inflation biset corresponding to inflation of representations, the deflation biset corresponding to taking fixed points of representations, and the isomorphism biset defined in the obvious way. See \cite[Section 3.d]{Yos90}.
	\end{definition}
	
	\begin{definition}  \label{def:class_fns_borel_smith_biset}
		\begin{enumerate}
			\item The assignment $G \mapsto A(G)$ is a biset functor $A(-)$. Any $U \in A(H,G)$ induces a map $T_U\colon A(G) \to A(H)$ given by $X \mapsto U \circ X $ for any $X \in A(G)$. One may verify that the induction biset corresponds to (additive) induction, the restriction biset corresponds to restriction, the inflation biset corresponds to inflation, and the deflation biset corresponds to taking orbits. 
			\item The dual biset functor $A(-)^*$ satisfies that any $U \in A(H,G)$ induces a morphism $T_U \in \Hom_\calC(G,H)$ defined as follows. If $f \in A(G)^*$, we have \[T_U(f)(X) := f(U\op \circ X)\] for any $X \in A(H)$. 
			\item Let $R$ be a commutative ring. We let $\CF(G,R)$ denote the ring of $R$-valued \emph{superclass functions}. If $R= \bbZ$, we write $\CF(G)$ for short.
			
			The biset $A(-)^*$ transports its structure to the biset functor $\on{CF}(-)$ in the following way. An element of $A(G)^*$ is determined by its values on the transitive $G$-sets $G/H$. It follows that the assignment \[\on{CF}(G) \to A(G)^*,\quad f \mapsto \big([G/H] \mapsto f(H)\big)\] is a ring isomorphism. 
			The biset structure on $\on{CF}(-)$ which upgrades this to a biset functor isomorphism is as follows:
			given a $(H,G)$-biset $U$ inducing $T_U \in \Hom_\calC(G,H)$ and $f \in \on{CF}(G)$, we have \[T_U(f)(H) = \sum_{u \in H\backslash U/G}f( H^u),\] where $H^u $ is the subgroup of $G$ defined by \[H^u := \{g \in G \mid \exists h\in H \text{ such that } hu = ug\}.\] See \cite[3.4]{BoYa07} for details. 
			More generally, $\CF(-, R)$ has the structure of a $R$-linear biset functor via the same biset action.
			\item Let $S$ be a $p$-group and let $\on{CF}_b(S)$ denote the subgroup of $\on{CF}(S)$ satisfying the following \emph{Borel--Smith} conditions:
			\begin{itemize}
				\item If $p$ is odd, then for any subquotient $Q/P$ of $G$ of order $p$, $f(Q) \equiv f(P) \mod 2$.
				\item If $p = 2$, then for any sequence of subgroups $P \trianglelefteq Q \trianglelefteq R \leq N_S(P)$, with $[Q:P] = 2$, $f(P) \equiv f(Q) \mod 2$ if $R/P$ is cyclic of order 4 and $f(P) \equiv f(Q) \mod 4$ is $R/P$ is quaternion of order 8.
				\item For any elementary abelian subquotient $Q/P$ of $S$ of rank 2, the equality \[f(P) - f(Q) = \sum_{P < X < Q} \big(f(X) - f(Q)\big)\] holds.
			\end{itemize}
			A function $f \in \on{CF}_b(S)$ is called a \emph{Borel--Smith} function. In \cite{BoYa07}, it is proven that $\on{CF}_b(-)$ is a $\calC_p$-biset subfunctor of $\on{CF}(-)$.
			
			Borel--Smith are intimately related to the study of representation spheres (see \cite{DH81, tD87}), endopermutation modules (see \cite{Bou06}), and endotrivial complexes (see \cite{M24c, GM26} and \Cref{rmk:endotriv_overview}). 
		\end{enumerate}
	\end{definition}
	
	\begin{definition}
		The assignment $G \mapsto A(G)^\times$ defines a $\calC_p$-biset functor $A(-)^\times$. The biset operation differs from that on $A(G)$; it is given by so-called \emph{generalized tensor induction}. In this setting, the restriction biset and inflation biset correspond to usual restriction and inflation, but induction corresponds to tensor induction, and deflation corresponds to the fixed points operation. See \cite{Bou07} or \cite[Section 11.2]{Bou10}. 
		
		In general, determining $A(G)^\times$ for non-$p$-groups is a famously difficult problem. It is classically known by an argument of tom Dieck that the statement ``if $G$ is solvable, then $|A(G)^\times| = 2$'' is equivalent to the Feit--Thompson odd order theorem, see \cite[Theorem 11.2.4]{Bou10}.
	\end{definition}
	
	\subsection{Morphisms of biset functors}
	
	We return to the morphisms mentioned in the introduction. Let $S$ be a $p$-group, then we have the \emph{dimension homomorphism}: 
	\[\dim_S\colon \on{RO}(S) \to \on{CF}_b(S),\quad V \mapsto \big(P \mapsto \dim V^P\big)\] and the \emph{sign homomorphism} 
	\[\on{sgn}_S\colon \on{CF}_b(S)\to A(S)^\times,\quad f \mapsto \big(P \mapsto (-1)^{f(P)}\big).\]    
	The \emph{tom Dieck homomorphism} $\Theta_S\colon \on{RO}(S) \to A(S)^\times$ is the composition:
	\[\Theta_S = \on{sgn}_G\circ\dim_G\colon V \mapsto \big(S \mapsto (-1)^{\dim V^S}\big).\]
	
	The dimension homomorphism and tom Dieck homomorphisms are known to be morphism of $p$-biset functors, and are surjective. 
	
	\begin{proposition}
		The dimension homomorphism $\dim_S$ is surjective, and the collection of dimension homomorphisms for every $p$-group induces a morphism of $\bbZ$-linear $p$-biset functors. 
	\end{proposition}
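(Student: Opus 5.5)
This proposition is classical, so the plan is to assemble it from known results rather than prove it from scratch. There are two parts: (i) $\dim_S$ is well-defined with image in $\CF_b(S)$ and is surjective; (ii) the collection $(\dim_S)_S$ commutes with the five elementary bisets, and hence is a morphism of $p$-biset functors.

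For well-definedness, $V \mapsto \dim V^P$ is additive in $V$, so it extends to $\RO(S)$. It is constant on conjugacy classes of subgroups, so $\dim V$ lies in $\CF(S)$. That these functions satisfy the Borel--Smith conditions is the classical theorem of Borel and Smith on fixed points of representation spheres. For this we cite \cite{tD87}, where $\dim V$ is exactly the dimension function of the sphere $S(V\oplus\mathbb{R})$.

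Surjectivity onto $\CF_b(S)$ is the Dotzel--Hamrick theorem \cite{DH81}, which we will cite. If a sketch is wanted, it goes by induction on $|S|$. Take $f\in\CF_b(S)$. Borel--Smith functions on a $p$-group are determined by their values on subgroups $P$ with $N_S(P)/P$ cyclic, or (for $p=2$) generalized quaternion; this is the rank-two elementary abelian relation. One then kills these values one at a time, from the top down, using representations inflated from such sections. The congruence conditions guarantee that the needed coefficients are integral. This step, or rather the reduction to ``cyclic'' sections, is where the real content lies. We do not reprove it.

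For the biset-functor statement, it suffices to check compatibility with restriction, isomorphism, inflation, deflation and induction, using the biset structure on $\CF(-)$ from \Cref{def:class_fns_borel_smith_biset} and the structure on $\RO(-)$ from \cite[Section 3.d]{Yos90}.
\begin{itemize}
\item \textbf{Restriction and isomorphism.} These are immediate, since fixed points of a restricted or transported module are unchanged.
\item \textbf{Inflation.} For $N\trianglelefteq S$ we have $(\Inf V)^P = V^{PN/N}$, which matches $T_{\Inf}(f)(P)=f(PN/N)$.
\item \textbf{Deflation.} For $Q\geq N$ we have $(V^N)^{Q/N}=V^Q$, which matches $T_{\Def}(f)(Q/N)=f(Q)$.
\item \textbf{Induction.} For $H\le S$, Mackey's formula gives
\[(\Ind_H^S V)^P \cong \bigoplus_{s\in P\backslash S/H}\big(\Ind^P_{P\cap {}^sH}\,{}^s V\big)^P \cong \bigoplus_{s} V^{P^s\cap H}.\]
The last step uses the fact that the $P$-fixed points of an induced module are the fixed points of the inducing subgroup. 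Taking dimensions gives $\sum_{s} \dim V^{P^s\cap H}$, which is exactly the formula $\sum_{u\in P\backslash S/H} f(P^u)$, since $P^u = H\cap P^s$ for $u=s$.
\end{itemize}
This is routine; the only care needed is matching the conventions for $H^u$. Finally, $\bbZ$-linearity holds because $\dim$ is additive.
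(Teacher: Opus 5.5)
Your proof is correct. On surjectivity it coincides with the paper's: both cite Dotzel--Hamrick \cite{DH81}, with tom Dieck \cite{tD87} covering the containment $\dim_S(\RO(S))\subseteq\CF_b(S)$. Your sketch of the reduction to subgroups $P$ with $N_S(P)/P$ of $p$-rank one is harmless but not needed.

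You differ from the paper on biset-functoriality. The paper only cites the remark after Theorem 3.3 of \cite{BoYa07}. You instead check naturality directly against the five elementary bisets, using the formula $T_U(f)(K)=\sum_{u\in K\backslash U/G} f(K^u)$ on $\CF(-)$, and your computations are right.
\begin{itemize}
\item For $\Ind^S_H$ one gets $P^u=\{h\in H\mid uhu^{-1}\in P\}=H\cap u^{-1}Pu$. This matches the Mackey--Frobenius computation of $\dim(\Ind^S_H V)^P$.
\item Restriction, isomorphism, inflation and deflation are immediate, as you say.
\item Reducing to elementary bisets is legitimate. Every transitive biset between $p$-groups factors through subquotients, and these are again $p$-groups.
\end{itemize}

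Your route is more self-contained: it needs only the Borel--Smith and Dotzel--Hamrick theorems. It also recovers for free the fact, which the paper attributes to \cite{BoYa07}, that $\CF_b(-)$ is a subfunctor of $\CF(-)$. By containment and surjectivity, $\CF_b(-)$ is exactly the image of the natural transformation $\dim\colon\RO(-)\to\CF(-)$, and an image of a natural transformation is a subfunctor. The paper's citation, in exchange, spares you matching the conventions for $H^u$ and for the biset structure on $\RO(-)$.
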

	\begin{proof}
		Surjectivity of $\dim_S$ is proven by Dotzel--Hamrick \cite{DH81} (see also tom Dieck \cite{tD87}), and that $\dim$ determines a $p$-biset functor is shown in \cite{BoYa07} in the remark after Theorem 3.3.
		
	\end{proof}
	
	\begin{proposition}
		The tom Dieck homomorphism $\Theta_S$ is surjective, and the collection of tom Dieck homomorphisms for every $p$-group induces a morphism of $\bbZ$-linear $p$-biset functors. 
	\end{proposition}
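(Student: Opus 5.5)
The statement has two halves of a different nature, and I will treat them separately. For surjectivity of $\Theta_S$ I will simply invoke the theorem of Tornehave \cite{Tor84}, reproved algebraically by Yal\c{c}in \cite{Y05}: every unit of $A(S)$ is of the form $(-1)^{\dim V^{(-)}}$ for some virtual real representation $V$. Alternatively, once I know that $\sgn_S$ is surjective onto $A(S)^\times$, surjectivity of $\Theta_S = \sgn_S \circ \dim_S$ follows from the previous proposition (Dotzel--Hamrick surjectivity of $\dim_S$ onto $\CF_b(S)$). So the surjectivity part reduces to citing the literature. Note that it is not even a priori clear that $\sgn_S$ lands in $A(S)^\times$ rather than merely in $\CF(S)^\times$; this well-definedness is also part of the cited results (e.g. \cite{BoYa07}, \cite[Section 11.2]{Bou10}).

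For the biset functor statement, I will write $\Theta = \sgn\circ\dim$. Since $\dim$ is already a morphism of $p$-biset functors by the previous proposition, it suffices to show that $\sgn\colon \CF_b(-)\to A(-)^\times$ is a morphism of $p$-biset functors. Here $\CF_b$ carries the additive structure transported from $A(-)^*$, and $A(-)^\times$ carries the generalized tensor induction structure. By the decomposition of transitive bisets, I only need to check compatibility with the five elementary bisets $\Res$, $\Ind$, $\Inf$, $\Def$, $\Iso$.

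I will check these using marks.
\begin{enumerate}
\item \emph{Restriction, inflation and isomorphism.} On both sides these act by precomposition on superclass functions. On $\CF$ one has $\Res f(P)=f(P)$ and $\Inf f(P)=f(PN/N)$, and the marks of restricted or inflated units are given by the same formulas. So commutation with $(-1)^{(\cdot)}$ is immediate.
\item \emph{Deflation.} On $\CF$, $\Def^G_{G/N} f(P/N)=f(P)$. On units, deflation is the fixed-points operation $X\mapsto X^N$, whose marks are $|(X^N)^{P/N}|=|X^P|$. Again this matches.
\item \emph{Induction.} This is the key check. On $\CF$, the biset formula gives $\Ind_H^G f(P)=\sum_{g\in P\backslash G/H} f(H\cap P^g)$. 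For tensor induction, the standard mark formula for generalized tensor induction of a unit $u$ is $m(\mathrm{Ten}_H^G u)(P)=\prod_{g\in P\backslash G/H} m(u)(P^g\cap H)$. Hence $(-1)^{\Ind f(P)}=\prod_g(-1)^{f(H\cap P^g)}$, which is exactly what is required.
\end{enumerate}

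The main obstacle is the induction step: pinning down the correct mark formula for generalized tensor induction on $A(-)^\times$ with consistent conventions. For this I would cite \cite[Section 11.2]{Bou10} or \cite{Bou07}, or derive it directly from the fact that for a $G$-set built as maps out of $G/H$, the $P$-fixed points factor over the $P$-orbits on $G/H$.
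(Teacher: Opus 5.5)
Your argument is correct, but for the functoriality half it takes a different route from the paper. The paper's proof is pure citation. Surjectivity comes from Tornehave and Yal\c{c}in, as in your proposal, and biset functoriality of $\Theta$ comes directly from Yoshida \cite[Lemma 3.5]{Yos90}.

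You instead factor $\Theta=\sgn\circ\dim$, take functoriality of $\dim$ from the preceding proposition, and check by marks that $\sgn\colon\CF_b(-)\to A(-)^\times$ is a morphism. That last step is essentially the paper's later \Cref{prop:sgn_is_biset_mor}. The paper obtains it differently: it identifies $r\circ\sgn$ with reduction mod $2$ and uses Bouc's embedding $A(-)^\times\hookrightarrow\Hom_\bbZ(A(-),\bbF_2)$ from \cite[Proposition 11.2.32]{Bou10}. The paper's remark just before that proposition also runs the implication the other way, deducing functoriality of $\sgn$ from that of $\Theta$ and $\dim$ together with surjectivity of $\dim$.

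Your route buys self-containment. It isolates the one nontrivial input, the multiplicative mark formula for generalized tensor induction, and gives functoriality of $\sgn$ and $\Theta$ at once. The paper's route buys brevity.

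Two simplifications are available to you:
\begin{enumerate}
\item The reduction to the five elementary bisets is unnecessary. Bouc's formula $|T_U(X)^K|=\prod_{u\in K\backslash U/G}|X^{K^u}|$ holds for every $(H,G)$-biset $U$. It is exactly the multiplicative counterpart of the additive formula $T_U(f)(K)=\sum_{u\in K\backslash U/G}f(K^u)$ on $\CF(-)$, so one line covers all $U$.
\item You can sidestep whether $\sgn_S$ lands in $A(S)^\times$, which needs tom Dieck plus Dotzel--Hamrick surjectivity. Compare $m_H(T_U\Theta_G(V))$ and $m_H(\Theta_H(T_UV))$ in $\CF(H)^\times$ via the formula above and use injectivity of the mark homomorphism. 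Here $\Theta(V)\in A(-)^\times$ is already presupposed by the statement.
\end{enumerate}
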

	\begin{proof}
		Surjectivity of $\Theta_S$ is proven by Tornehave \cite{Tor84} and Yal\c{c}in \cite{Y05}, and that $\Theta$ determines a biset functor is shown by Yoshida \cite[Lemma 3.5]{Yos90}.
		
	\end{proof}
	
	We show the sign homomorphism is also a morphism of biset functors and characterize it as nothing more than base change. However, biset functoriality can also be shown directly via diagram chase from biset functoriality of $\Theta$ and $\dim$, and surjectivity of $\dim$. 
	Since $\on{CF}(S)^\times$ is nothing more than superclass functions with values in $\pm 1$, we clearly have a group isomorphism 
	\[ \on{CF}(S,\bbF_2) \to \on{CF}(S)^\times, \quad f \mapsto \big(P \mapsto (-1)^{f(P)}\big).\] As we view $A(S)^\times$ as a subgroup of $\on{CF}(S)^\times$ via the mark homomorphism $m_S$, we obtain a commutative diagram as follows. 
	
	\begin{figure}[H]
		\centering
		\begin{tikzcd}
			&  \on{CF}(G, \bbF_2) \ar[dd, "\cong"] \\
			A(G)^\times \ar[ur, hookrightarrow, "r_G"] \ar[dr, hookrightarrow, "m_G"] \\
			& \on{CF}(G)^\times
		\end{tikzcd}
	\end{figure}
	
	\begin{proposition}\label{prop:identification_of_unit_biset}
		Let $S$ be a $p$-group. The inclusions $r_S\colon A(S)^\times \hookrightarrow  \mathrm{CF}(S, \bbF_2)$ induce an inclusion of $\bbF_2$-linear $p$-biset functors $r\colon A(-)^\times \hookrightarrow  \mathrm{CF}(-, \bbF_2)$.
	\end{proposition}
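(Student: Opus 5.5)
The plan is to compare, biset by biset, the generalized tensor induction action on $A(-)^\times$ with the biset action on $\CF(-,\bbF_2)$ from \Cref{def:class_fns_borel_smith_biset}. Since the composite $A(S)^\times \to \CF(S)^\times$ is the mark homomorphism, the comparison reduces to a statement about marks.

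\textbf{Linearity and reduction to elementary bisets.} First, $A(S)^\times$ is an elementary abelian $2$-group, since every unit has marks in $\{\pm1\}$. Its biset action consists of group homomorphisms, so $A(-)^\times$ is naturally an $\bbF_2$-linear $p$-biset functor. Each $r_S$ is an injective group homomorphism, because $m_S$ is injective and the vertical map in the diagram is an isomorphism. It therefore remains to prove naturality: for every $(H,G)$-biset $U$ between $p$-groups, $r_H\circ T_U = T_U\circ r_G$. Both sides are additive in $U$ and compatible with composition of bisets. By \cite[Lemma 2.3.26]{Bou10} it would suffice to check the five elementary bisets. However, I would instead check an arbitrary $U$ directly via a single mark formula.

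\textbf{Key input: the mark formula for generalized tensor induction.} The main step is the formula (see \cite[Section 11.2]{Bou10}, \cite{Bou07}, or \cite[Lemma 3.5]{Yos90}): for a $(H,G)$-biset $U$, a unit $a\in A(G)^\times$, and $L\le H$,
\[ m_H\big(T_U(a)\big)(L) = \prod_{u\in L\backslash U/G} m_G(a)(L^u), \]
where $L^u=\{g\in G \mid \exists \ell\in L,\ \ell u=ug\}$ is as in \Cref{def:class_fns_borel_smith_biset}. If citing this for general $U$ is uncomfortable, I would verify it on the generators.
\begin{itemize}
\item For restriction, inflation and isomorphism, the formula is immediate, since these act on units exactly as on $A(-)$ and fixed points transport directly.
\item For deflation, $T_U(a)$ is the fixed-point operation $a\mapsto a^N$. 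Its mark at $P/N$ is $m_G(a)(P)$. On the other side, $U=G/N$ has a single $(P/N,G)$-orbit with $(P/N)^u=P$.
\item For induction $\Ind^G_K$, tensor induction satisfies the classical multiplicative Mackey-type formula $m_G(\on{Jac}^G_K a)(L)=\prod_{g\in L\backslash G/K} m_K(a)(K\cap L^g)$. Here $K\cap L^g$ is precisely the subgroup $L^u$ attached to the double coset $u=g$.
\end{itemize}

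\textbf{Conclusion.} Apply the isomorphism $\CF(-)^\times\cong \CF(-,\bbF_2)$, $(-1)^{f}\leftrightarrow f$. Products of signs become sums mod $2$, so the mark formula becomes
\[ r_H(T_Ua)(L)=\sum_{u\in L\backslash U/G} r_G(a)(L^u), \]
which is exactly the formula for $T_U$ on $\CF(-,\bbF_2)$ applied to $r_G(a)$. Hence $r$ is a natural transformation, and it is pointwise injective.

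I expect the only real obstacle to be pinning down the mark formula for generalized tensor induction with the correct convention for $L^u$. The rest is bookkeeping. If that formula is hard to cite in general, I would fall back on the elementary-biset check above, handling induction with the classical tensor induction mark formula.
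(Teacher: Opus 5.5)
Your proposal is correct, but it takes a different route from the paper. The paper splits by $p$. For odd $p$ it notes $A(S)^\times=\{\pm[S/S]\}$ and calls the claim straightforward. For $p=2$ it cites Bouc's embedding $\iota\colon A(-)^\times\hookrightarrow\Hom_\bbZ(A(-),\bbF_2)$ of $\bbF_2$-linear $2$-biset functors \cite[Proposition 11.2.32]{Bou10}, carried over to $\CF(-,\bbF_2)$ via the identification $\CF(-)\cong A(-)^*$ of \Cref{def:class_fns_borel_smith_biset}.

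You instead prove naturality directly. The mark formula $m_H(T_Ua)(L)=\prod_{u\in L\backslash U/G}m_G(a)(L^u)$ for generalized tensor induction becomes, under $(-1)^f\leftrightarrow f$, exactly the defining formula $T_U(f)(L)=\sum_{u\in L\backslash U/G}f(L^u)$ on $\CF(-,\bbF_2)$. Your convention for $L^u$ matches the paper's on both sides, and your fallback checks for deflation and induction are also correct.

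This essentially unpacks what Bouc's proposition establishes. Your only external input is the mark formula rather than the embedding itself. In return you get three things:
\begin{itemize}
\item an argument that is uniform in $p$, with no case split needed;
\item an explicit handling of the odd case: it is the instance $a=-1$, where both sides give $(-1)^{|L\backslash U/G|}$;
\item an explicit check of a point the paper leaves implicit, namely that Bouc's $\iota$ agrees with $r$ under $\CF(-,\bbF_2)\cong\Hom_\bbZ(A(-),\bbF_2)$.
\end{itemize}
Your computation also never uses that the groups involved are $p$-groups.
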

	\begin{proof}
		If $p$ is odd, then $A(S)^\times = \{\pm[S/S]\}$ and the result is straightforward. Otherwise if $p=2$, this follows from \cite[Proposition 11.2.32]{Bou10} after identifying $ \mathrm{CF}(-, \bbF_2)$ as the $\bbF_2$-dual biset functor $\Hom_\bbZ(A(-), \bbF_2)$; \cite[Proposition 11.2.32]{Bou10} asserts an inclusion of $\bbF_2$-linear $2$-biset functors \[\iota\colon A(-)^\times \to \Hom_\bbZ(A(-), \bbF_2).\] 
		
	\end{proof}
	
	In other words, $A(-)^\times$ is a $\bbF_2$-linear $2$-biset subfunctor of $\CF(-, \bbF_2)\cong \bbF_2 \otimes_\bbZ \CF(-)$. Note these are $\bbZ$-linear biset functors as well via the base change $\bbZ \to \bbF_2.$
	
	\begin{proposition}\label{prop:sgn_is_biset_mor}
		Let $G$ be a $p$-group. The composition \[r_G \circ \on{sgn}_G\colon \CF_b(G) \twoheadrightarrow A(G)^\times \hookrightarrow \mathrm{CF}(G, \bbF_2)\] identifies with the base change \[\bbF_2 \otimes_\bbZ- \colon \CF_b(G)\to \bbF_2\otimes_\bbZ\mathrm{CF}(G) \cong \CF(G,\bbF_2),\] a $\bbZ$-linear morphism of $p$-biset functors. Consequently, the collection of group homomorphisms $\on{sgn}_G$ collectively induce a morphism between $\bbZ$-linear $p$-biset functors \[\on{sgn}\colon \CF_b(-) \twoheadrightarrow A(-)^\times.\]
	\end{proposition}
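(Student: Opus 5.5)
The plan is to verify the identification pointwise, check biset-functoriality of the base change map, and then descend along the injection $r$ of \Cref{prop:identification_of_unit_biset}.

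\emph{Pointwise identification.} The composite $r_G\circ\on{sgn}_G$ sends $f\in\CF_b(G)$ to the unit $P\mapsto(-1)^{f(P)}$, viewed inside $\CF(G)^\times$. Under the isomorphism $\CF(G,\bbF_2)\cong\CF(G)^\times$, $g\mapsto\big(P\mapsto(-1)^{g(P)}\big)$, this unit corresponds to the function $P\mapsto f(P)\bmod 2$. That is exactly the image of $f$ under
\[\CF_b(G)\hookrightarrow\CF(G)\to\bbF_2\otimes_\bbZ\CF(G)\cong\CF(G,\bbF_2).\]
Since the diagram defining $r_G$ commutes, this gives the claimed identification. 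It is purely a matter of unwinding definitions.

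\emph{Functoriality of reduction mod $2$.} The map $\CF(-)\to\CF(-,\bbF_2)$, $f\mapsto \bar f$, commutes with every $T_U$. The reason is that the action formula $T_U(f)(H)=\sum_{u\in H\backslash U/G}f(H^u)$ is the same integral formula over both coefficient rings, and reduction mod $2$ is a ring map. Restricting to the subfunctor $\CF_b(-)$, which is a $p$-biset subfunctor by \cite{BoYa07}, gives a $\bbZ$-linear morphism of $p$-biset functors $\CF_b(-)\to\CF(-,\bbF_2)$.

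\emph{Descent to $\on{sgn}$.} Let $U$ be an $(H,G)$-biset of $p$-groups and $f\in\CF_b(G)$. The previous two steps give
\[r_H(\on{sgn}_H(T_Uf))=\overline{T_Uf}=T_U(\bar f)=T_U(r_G(\on{sgn}_G f)).\]
By \Cref{prop:identification_of_unit_biset}, $r$ is a morphism of $p$-biset functors, so the right-hand side equals $r_H(T_U\on{sgn}_G f)$. Since $r_H$ is injective, $\on{sgn}_H(T_Uf)=T_U(\on{sgn}_G f)$. Here $T_U$ on units is generalized tensor induction, and $\bbZ$-linearity follows because $\bbF_2$-linear functors are $\bbZ$-linear via $\bbZ\to\bbF_2$.

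\emph{Well-definedness and surjectivity of $\on{sgn}_G$.} The statement writes $\twoheadrightarrow$, which presupposes that $\on{sgn}_G$ lands in $A(G)^\times$ and is onto. Both follow from the factorization $\Theta_G=\on{sgn}_G\circ\dim_G$: the image of $\on{sgn}_G$ is $\Theta_G(\RO(G))\subseteq A(G)^\times$, because $\dim_G$ is surjective. Surjectivity of $\Theta_G$ \cite{Tor84,Y05} then gives surjectivity of $\on{sgn}_G$.

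The only real subtlety is the appeal to \Cref{prop:identification_of_unit_biset}. That result hides the nontrivial fact that generalized tensor induction on units corresponds to the additive biset action on $\CF(-,\bbF_2)$. For $p$ odd, where $A(G)^\times=\{\pm1\}$, one should also check directly that the mod-$2$ reduction of a Borel--Smith function is constant. This follows from the order-$p$ condition $f(Q)\equiv f(P)\bmod 2$, by chaining along a composition series from $P$ up to $G$.
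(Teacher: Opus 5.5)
Your proposal is correct and follows the paper's argument: identify $r_G\circ\sgn_G$ with reduction mod $2$, note that reduction mod $2$ commutes with the biset action on $\CF(-)$, and descend along the injective biset morphism $r$. Beyond the paper, you justify well-definedness and surjectivity of $\sgn_G$ via $\Theta_G=\sgn_G\circ\dim_G$ together with the Dotzel--Hamrick and Tornehave--Yal\c{c}in surjectivity results, which the paper only asserts, and your ring-map argument for functoriality of reduction mod $2$ does not need the paper's flatness remark.
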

	\begin{proof}
		The first statement is a routine verification which we leave to the reader. For the second, since $2\bbZ \subset \bbZ$ is flat, $\bbF_2 \otimes_\bbZ - \colon \on{CF}_b(-) \to \bbF_2\otimes_\bbZ \mathrm{CF}(-)$ intertwines with the biset structure of $\CF(-)$ (see \Cref{def:class_fns_borel_smith_biset}) and therefore is a morphism of $p$-biset functors. As $r$ is a $p$-biset functor by \Cref{prop:identification_of_unit_biset} and injective, we conclude $\sgn$ is compatible with the biset operations and hence also is a morphism of $\bbZ$-linear $p$-biset functors. 
		
	\end{proof}

	\section{Fusion systems and characteristic idempotents}
	
	For this paper, we will essentially black-box the technical details of fusion systems. The key point is the existence of so-called characteristic idempotents. For a comprehensive overview of fusion systems, see \cite{AKO11}.
	
	\begin{definition}
		Let $S$ be a $p$-group. A \emph{fusion system} $\calF$ on $S$ is a category whose objects are the subgroups of $S$ and whose morphisms between subgroups $P$ and $Q$ of $S$ is a set $\Hom_\calF(P,Q)$ of injective group homomorphisms from $P$ to $Q$ with the following properties:
		\begin{enumerate}
			\item If ${}^gP \leq Q$, then $c_g\colon P \to {}^gP \in \Hom_\calF(P,Q)$;
			\item For any $\varphi\in \Hom_\calF(P,Q)$, the induced homomorphism $P \cong \varphi(P)$ and its inverse are morphisms in $\calF;$
			\item The composition of morphisms in $\calF$ is the usual composition of group homomorphisms.
		\end{enumerate}
		
		Given a subgroup $P \leq S$, we let $[P]_\calF$ denote the set of subgroups isomorphic to $P$ in $\calF$. We omit the rather technical definition of a \emph{saturated} fusion system for brevity, see \cite[Part I, Section 2]{AKO11}.
	\end{definition}
	
	\begin{example}
		If $G$ is a finite group and $S$ is a Sylow $p$-subgroup of $G$, we obtain a saturated fusion system $\calF_S(G)$ on $S$ with morphism sets \[\Hom_G(P,Q) := \{\varphi \in \Hom(P,Q)\mid \varphi = c_g \text{ for some } g \in G\}.\]
	\end{example}
	
	Linckelmann--Webb in an unpublished manuscript first considered the idea of linking fusion systems to biset Burnside rings. The following definition is credited to them. 
	
	\begin{definition}
		Let $S$ be a $p$-group. A \emph{characteristic element} of $\calF$ is an element $X$ in the \emph{$p$-localized biset Burnside ring} $A(S,S)_{(p)}:= \bbZ_{(p)}\otimes_\bbZ A(S,S)$ satisfying the following properties, referred to in \cite{RS13} as the \emph{Linckelmann--Webb properties}:
		\begin{enumerate}
			\item \emph{$\calF$-generation}: $X$ is a linear combination of the twisted diagonal bifree $(S,S)$-bisets $[P,\varphi]^S_S$ (see \Cref{ex:twisted_diag});
			\item \emph{Right $\calF$-stability}: For all $P \leq S$ and $\varphi \in \Hom_\calF(P,S)$, we have in $A(S,P)_{(p)}$ an equality \[X \circ [P,\varphi]^S_P = X \circ [P,\on{incl}]^S_P.\]
			\item \emph{Left $\calF$-stability}: For all $P\leq S$ and $\varphi \in \Hom_\calF(P,S)$, we have in $A(P,S)_{(p)}$ an equality \[[\varphi(P), \varphi\inv]^P_S \circ X = [P,\id]^P_S \circ X.\]
			\item $\epsilon(X) := |X|/|S|$ (the \emph{augmentation} of $X$) is coprime to $p$.
		\end{enumerate}
		A characteristic element of $\calF$ that is in addition idempotent in $A(S,S)_{(p)}$ is called a \emph{characteristic idempotent}.
	\end{definition}
	
	Ragnarsson first showed that every saturated fusion system has a unique characteristic idempotent \cite{Rag06}, and Puig demonstrated that a fusion system is saturated if and only if it has a characteristic element \cite{Pui09}. These facts were strengthened by Ragnarsson--Stancu as follows. 
	
	\begin{theorem}\cite[Theorem C]{RS13}
		For any finite $p$-group $S$, there is a bijective correspondence between saturated fusion systems on $S$ and bifree idempotents in $A(S,S)_{(p)}$ of augmentation 1 satisfying Frobenius reciprocity (see \cite[Definition 7.1]{RS13}). The bijection sends a saturated fusion system $\calF$ to its characteristic idempotent $\omega_\calF$. 
	\end{theorem}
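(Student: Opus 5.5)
The statement is imported from \cite[Theorem C]{RS13}. To prove it directly I would construct maps in both directions between saturated fusion systems on $S$ and the set $\mathcal{I}$ of bifree idempotents in $A(S,S)_{(p)}$ of augmentation $1$ satisfying Frobenius reciprocity, and check that they are mutually inverse. The approach is to treat the Linckelmann--Webb properties as the dictionary between the two sides. I would use the mark homomorphism on $A(S\times S)_{(p)}$ throughout to turn identities in $A(S,S)_{(p)}$ into congruences between fixed-point counts $|X^{\Delta(Q,\psi)}|$.

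\textbf{Step 1: from $\calF$ to an idempotent.} Let $\calF$ be saturated. I would first build a characteristic element $\Omega$. The natural candidate is a weighted sum of the bisets $[P,\varphi]^S_S$ over $\calF$-classes of pairs $(P,\varphi)$, with weights chosen inductively from the largest subgroups downward. Saturation (fully normalized subgroups and the extension axiom) is exactly what is needed to solve for the weights so that left and right $\calF$-stability hold, and it keeps $|X|/|S|$ prime to $p$. This is essentially the Broto--Levi--Oliver construction, which I would take as known. Next I would pass from $\Omega$ to an idempotent. Since $\epsilon(\Omega)$ is a unit in $\bbZ_{(p)}$, the image of $\Omega$ in the ring of $\calF$-stable, $\calF$-generated elements acts invertibly modulo a nilpotent ideal; this can be seen on marks of the twisted diagonals. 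Hence a suitable power $\Omega^{n}$, scaled and taken as a $p$-adic limit, is an idempotent $\omega_\calF$. The limit stays in $A(S,S)_{(p)}$ because the relevant algebra is finite over $\bbZ_{(p)}$ and the idempotent is determined by its reduction mod $p$. For uniqueness: if $\omega,\omega'$ are both characteristic idempotents, then $\calF$-stability gives $\omega\circ\omega'=\epsilon(\omega)\,\omega'=\omega'$, and symmetrically $\omega\circ\omega'=\omega$, so $\omega=\omega'$. Frobenius reciprocity follows from left and right stability together with $\calF$-generation.

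\textbf{Step 2: from an idempotent to $\calF$.} Given $\omega\in\mathcal{I}$, I would define $\calF_\omega$ by declaring $\varphi\in\Hom_{\calF_\omega}(P,Q)$ if and only if
\[\omega\circ[P,\varphi]^S_P=\omega\circ[P,\mathrm{incl}]^S_P.\]
I would then check the fusion-system axioms: conjugations lie in $\calF_\omega$, and $\calF_\omega$ is closed under composition, restriction and inverses. This is where Frobenius reciprocity enters, converting right stability into left stability so that inverses work. Saturation of $\calF_\omega$ would then follow from Puig's criterion \cite{Pui09}, provided $\omega$ is a characteristic element of $\calF_\omega$. Finally, the two maps are mutually inverse: $\calF_{\omega_\calF}=\calF$ because a morphism stabilizes $\omega_\calF$ exactly when it lies in $\calF$, as read off on marks; and $\omega_{\calF_\omega}=\omega$ by the uniqueness argument in Step 1.

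\textbf{Main obstacle.} The hard step is showing that $\omega$ is \emph{$\calF_\omega$-generated}: every twisted diagonal $\Delta(P,\varphi)$ in the support of $\omega$ must have $\varphi\in\calF_\omega$. Stability is built into the definition of $\calF_\omega$, but generation is not. My first attempt would argue by downward induction on $|P|$ using the marks of $\omega$. Idempotence gives $\omega\circ\omega=\omega$, and Frobenius reciprocity lets $\omega\circ[P,\varphi]^S_P$ be computed from the fixed points of $\omega$ under twisted diagonals. Comparing these marks should show that any $\varphi$ occurring with nonzero coefficient at a maximal $P$ stabilizes $\omega$, and the induction then removes such terms.
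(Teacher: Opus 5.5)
The paper gives no proof of this statement; it imports \cite[Theorem C]{RS13}, so your outline has to stand on its own. It does not: the direction that carries the content of the theorem, that a bifree idempotent $\omega$ of augmentation $1$ satisfying Frobenius reciprocity (FR) is the characteristic idempotent of a saturated fusion system, is not proved, and FR does no identifiable work anywhere in your argument.

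First, FR is not needed where you invoke it. With $\calF_\omega$ defined by right stability, the fusion-system axioms hold for \emph{every} $\omega\in A(S,S)_{(p)}$, inverses included: composing $\omega\circ[P,\varphi]^S_P=\omega\circ[P,\on{incl}]^S_P$ on the right with $\Iso(\varphi^{-1})$ gives $\omega\circ[\varphi(P),\on{incl}]^S_{\varphi(P)}=\omega\circ[\varphi(P),\varphi^{-1}]^S_{\varphi(P)}$.

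Second, only right stability is built into $\calF_\omega$. Left $\calF_\omega$-stability is also needed for $\omega$ to be a characteristic element, and $\calF_\omega$-generation is left at ``should show''. Both genuinely depend on FR, and a marks argument using only idempotence cannot supply them. Take $S=C_p\times C_p$ with $p\geq 5$ and $H\leq\on{Aut}(S)$ with $H\cong S_3$. Since $S$ is abelian, the bisets $[S,h]^S_S$ ($h\in H$) span a subring $\bbZ_{(p)}H\cong\bbZ_{(p)}\times\bbZ_{(p)}\times M_2(\bbZ_{(p)})$ of $A(S,S)_{(p)}$, on which $\epsilon$ is the trivial character. So for any rank-one idempotent $f$ of the matrix block, $\omega=e_1+f$ with $e_1=\frac16\sum_h[S,h]^S_S$ is a bifree idempotent with $\epsilon(\omega)=1$.
\begin{itemize}
\item There are infinitely many such $\omega$ but only finitely many saturated fusion systems on $S$, so most are not characteristic.
\item For those, your base case fails at $P=S$. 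If every $h$ in the support right-stabilized $\omega$, then $\omega$ would be invariant under $K=\langle\on{supp}\,\omega\rangle$, hence equal to the characteristic idempotent $\frac1{|K|}\sum_{k\in K}[S,k]^S_S$.
\item For suitable $f$ (for instance $f\tau=f\neq\tau f$ for a transposition $\tau$), the left and right stabilizers of $\omega$ differ, so left $\calF_\omega$-stability fails too.
\end{itemize}
It is FR that rules these examples out. The computation extracting generation and two-sided stability from FR, which is the substance of \cite{RS13}, is exactly what your outline omits.

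Step 1 also needs repair in two places.
\begin{enumerate}
\item Your $p$-adic limit lives in $A(S,S)\otimes\bbZ_p$, and ``determined by its reduction mod $p$'' does not put it in $A(S,S)_{(p)}$; $\bbZ_{(p)}$ is not Henselian, so idempotents need not lift. You need a rationality argument. For example, $\omega_\calF$ is the two-sided identity of the ring of $\calF$-generated, left and right $\calF$-stable elements. That is a linear condition over $\bbQ$ with a unique solution, so $\omega_\calF$ lies in $(A(S,S)\otimes\bbQ)\cap(A(S,S)\otimes\bbZ_p)=A(S,S)_{(p)}$.
\item The identity $\omega\circ\omega'=\epsilon(\omega)\,\omega'$ does not follow from stability alone. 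For the trivial fusion system, take the characteristic element $X=[S,\id]^S_S+[P,\id]^S_S$ with $[S:P]=p$ and $Y=[S,\id]^S_S$; then $X\circ Y=X\neq(1+p)Y$. For a characteristic \emph{idempotent} $\omega$ and left-stable $Y$, the correct route is this. Left stability of $Y$ rewrites $\omega\circ Y$ as $\beta\circ Y$, where $\beta$ is the image of $\alpha=T_\omega([S/S])\in A(S)_{(p)}$ under $[S/Q]\mapsto[Q,\id]^S_S$. Idempotence and left stability of $\omega$ make $\alpha$ an idempotent of $A(S)_{(p)}$ with $|\alpha|=\epsilon(\omega)=1$. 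Hence $\alpha=1$ and $\omega\circ Y=Y$, which is what your uniqueness step actually requires.
\end{enumerate}
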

	
	The structure of characteristic idempotents has been studied in greater detail, see e.g. \cite{BD12, GSY15, Ree16}. 
	
	\subsection{$\calF$-stabilization for $p$-localized biset functors}
	
	Though characteristic idempotents $\omega_\calF \in A(S,S)_{(p)}$ of saturated fusion systems $\calF$ may not necessarily be lifted to elements of $A(S,S)$, they are still useful $p$-locally. The key point is that action by $\omega_\calF$ on a $\bbZ_{(p)}$-linear biset functor $\calF$-stabilizes elements. 
	
	\begin{definition}\cite[Definition 2.2]{RY18}
		Let $\calF$ be a fusion system on a $p$-group $S$ and let $F$ be a $p$-biset functor. We set $F(S)^\calF$ to be the set of elements $x\in F(S)$ satisfying \[T_{[P, \varphi]^S_P}(x) = T_{[P, \on{incl}]^S_P}(x)\] for all $P \leq S$ and $\varphi \in \Hom_\calF(P,S)$. 
	\end{definition}
	
	\begin{remark}\label{def:stable_biset}
		For the bisets $\RO(-)$, $\CF_b(-)$, and $A(-)^\times$, the biset-theoretic $\calF$-stable subgroups are exactly the standard $\calF$-theoretic subgroups $\RO(\calF)$, $\CF_b(\calF)$, and $A(\calF)^\times$. 
		It is routine to verify the following from \Cref{def:class_fns_borel_smith_biset}:
		\begin{itemize}
			\item $\on{RO}(S)^\calF$ is equivalently the subring $\RO(\calF)$ of virtual $\bbF$-stable real representations, i.e., those satisfying $\Res^S_P V \cong c_{\varphi}\circ\Res^S_{\varphi(P)}{} V$ (see \cite[Section 3]{RY18});
			\item For any commutative ring $R$, $\CF(S,R)^\calF$ is equivalently the subgroup $\CF(\calF,R) $ consisting of $\calF$-stable superclass functions, i.e., functions constant on $\calF$-isomorphism classes (see \cite[Section 4]{RY18});
			\item $A(S)^\calF$ is equivalently the subring $A(\calF)$ with $\calF$-stable mark homomorphisms (see \cite{DL09});
			\item The subgroup $\CF_b(S)^\calF$ is simply $\CF_b(\calF) =\CF(\calF) \cap \CF_b(S)$ (see \cite[Section 4]{RY18});
			\item The subgroup $(A(S)^\times)^\calF$, viewed as a subgroup of $\CF(S,\bbF_2)$ via \Cref{prop:identification_of_unit_biset}, is $A(\calF)^\times$ (see \cite{BC20}).
		\end{itemize}
		We note there are other equivalent definitions of $\calF$-stability on the Burnside ring, see \cite[Subsection 3.1]{Ree16}.
	\end{remark}
	
	\begin{notation}
		Given any ($\bbZ$-linear) $p$-biset functor $F$, we set $F_{(p)} := \bbZ_{(p)}\otimes_\bbZ F$. Then $F_{(p)}$ is a $\bbZ_p$-linear $p$-biset functor. 
	\end{notation}
	
	The following proposition of Reeh--Yal\c{c}in is a general form of \cite[Theorem A]{Ree16}, and is critical for our purposes.
	
	\begin{proposition}\cite[Proposition 2.5]{RY18}\label{prop:make_fusion_stable}
		Let $\calF$ be a saturated fusion system on a $p$-group $S$, and let $F$ be a $p$-biset functor. We have \[F(S)^\calF_{(p)} = T_{\omega_\calF}(F(S)_{(p)}).\] Moreover, $T_{\omega_\calF}$ is idempotent and for all $x \in F(S)^\calF_{(p)}$, we have $T_{\omega_\calF}(x) = x$. 
	\end{proposition}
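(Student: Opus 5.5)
The plan is to split the statement into three claims: (i) $T_{\omega_\calF}$ is idempotent; (ii) its image lies in the $\calF$-stable elements; (iii) it fixes every $\calF$-stable element of $F(S)_{(p)}$. Together these give $F(S)^\calF_{(p)} = T_{\omega_\calF}(F(S)_{(p)})$. Throughout I write $\omega := \omega_\calF$ and work in $F_{(p)}$, which is a $\bbZ_{(p)}$-linear $p$-biset functor, so $A(S,S)_{(p)}$ acts on $F(S)_{(p)}$ and $T_{X\circ Y} = T_X\circ T_Y$. I read the stability condition as the equality of the two restrictions to $P$, namely the one twisted by $\varphi$ and the one by inclusion. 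This means the relevant bisets are the opposites $[\varphi(P),\varphi\inv]^P_S$ and $[P,\id]^P_S$, matching the left Linckelmann--Webb property.

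Claim (i) is immediate: $\omega\circ\omega = \omega$ in $A(S,S)_{(p)}$ and $T$ is functorial.

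For claim (ii), let $x \in F(S)_{(p)}$, $P\le S$ and $\varphi \in \Hom_\calF(P,S)$. Left $\calF$-stability of $\omega$ gives
\[T_{[\varphi(P),\varphi\inv]^P_S}(T_\omega x) = T_{[\varphi(P),\varphi\inv]^P_S\circ\omega}(x) = T_{[P,\id]^P_S\circ \omega}(x) = T_{[P,\id]^P_S}(T_\omega x).\]
So $T_\omega x$ is $\calF$-stable. Together with (iii), this yields both the image statement and idempotence on stable elements.

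Claim (iii) is the substantive part. Write $\omega = \sum c_{P,\varphi}[P,\varphi]^S_S$ using $\calF$-generation, and factor each summand as $\Ind^S_{\varphi(P)}\circ\Iso(\varphi)\circ\Res^S_P$ (\Cref{ex:twisted_diag}). For stable $x$, the condition $\Iso(\varphi)\Res^S_P x = \Res^S_{\varphi(P)}x$ gives
\[T_\omega(x) = T_{\bar\omega}(x), \qquad \bar\omega := \sum c_{P,\varphi}\,\Ind^S_{\varphi(P)}\Res^S_{\varphi(P)}.\]
The coefficient of the identity term $[S,\id]$ in $\bar\omega$ is $\sum_{\varphi\in\mathrm{Aut}_\calF(S)} c_{S,\varphi}$. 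The plan is to show that $y := T_\omega x - x$ vanishes. By (i) and (ii), $y$ is stable and $T_\omega y = 0$, so $T_{\bar\omega}(y)=0$. I would then argue by induction over subgroups, applying the same reasoning to the restrictions $\Res^S_Q y$. This needs a compatibility of $\omega$ with restriction to $\calF$-stable pieces, which I expect to extract from right $\calF$-stability, $\omega\circ[P,\varphi]^S_P=\omega\circ[P,\on{incl}]^S_P$. I also expect to use that the leading coefficient is a $p$-local unit because $\epsilon(\omega)=1$.

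The main obstacle is exactly this last step: deducing $y=0$ from $T_{\bar\omega}y=0$. A triangularity argument on the sum of terms $\Ind\Res$ does not immediately work, since $\bar\omega$ is not obviously invertible on stable elements. If the direct induction fails, my first fallback is the Frobenius reciprocity/symmetry of $\omega$ from Ragnarsson--Stancu, \cite[Theorem C]{RS13}, which gives $\omega\op=\omega$. With symmetry, right stability transposes to a statement about $T_\omega$ on stable inputs. I would then try to show directly that $\omega$ acts on $\calF$-stable elements as the coefficient sum $\epsilon(\omega)=1$ times the identity, by comparing with the stable-element description of $F(S)^\calF$ used in \cite{Ree16}.
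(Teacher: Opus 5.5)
The proposal has a genuine gap: claim (iii) is not proved. Your steps (i) and (ii) are correct. So is the reduction $T_\omega(x)=T_{\bar\omega}(x)$ for stable $x$, and so is your reading of stability through the restriction bisets $[\varphi(P),\varphi\inv]^P_S$ and $[P,\id]^P_S$. But (iii) is the entire content of the proposition. You reduce it to showing that $T_{\bar\omega}(y)=0$ forces $y=0$ for stable $y$, and then you do not carry this out.

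You also misidentify what the missing step needs. Neither right $\calF$-stability nor $\omega\op=\omega$ enters. The fallback, that $\omega$ acts on stable elements as the scalar $\epsilon(\omega)$, is valid only when $\Ind^S_R\Res^S_R$ acts as multiplication by $[S:R]$, i.e.\ for cohomological functors. For the Burnside functor, $T_{\bar\omega}([S/S])=\sum_R a_R[S/R]$, where $a_R=\sum_{\varphi(P)=_S R}c_{P,\varphi}$. Knowing $\sum_R a_R[S:R]=1$, or that $a_S$ is a $p$-local unit, says nothing about the individual $a_R$ with $R<S$. So as written the argument stalls exactly where you say it does.

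Your induction does go through, with a different ingredient. Prove $\Res^S_Q y=0$ by induction on $|Q|$. Apply $\Res^S_Q$ to $0=\sum_R a_R\Ind^S_R\Res^S_R(y)$. The Mackey formula, together with the fact that conjugation by elements of $S$ acts trivially on $F(S)$, turns this into $\sum_R a_R\sum_{s\in Q\backslash S/R}\Ind^Q_{Q\cap{}^sR}\Res^S_{Q\cap{}^sR}(y)$. Terms with $Q\cap{}^sR<Q$ vanish by the induction hypothesis. The surviving double cosets are exactly the $Q$-fixed cosets in $S/R$, each contributing $\Res^S_Q(y)$. Hence $\kappa_Q\Res^S_Q(y)=0$, where $\kappa_Q=\sum_R a_R|(S/R)^Q| = m_S(T_\omega([S/S]))(Q)$.

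The key point is that $\kappa_Q$ is a $p$-local unit for every $Q$, not just for $Q=S$. Since $|X^Q|\equiv |X|\pmod p$ for a $p$-group $Q$, this mark of the element $T_\omega([S/S])\in A(S)_{(p)}$ is congruent to its mark at the trivial subgroup. That mark is $\epsilon(\omega)=1$, so $\kappa_Q\in\bbZ_{(p)}^\times$ and $\Res^S_Q(y)=0$. Taking $Q=S$ gives $y=0$.

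A posteriori, applying (iii) to $F=A(-)$ and $x=[S/S]$ shows $a_R=0$ for $R<S$ and $a_S=1$. That is, $\bar\omega=[S,\id]^S_S$ exactly, which explains why the unit appears at every level. For reference, the paper gives no proof of this statement and imports it from Reeh--Yal\c{c}in.
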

	
	This is readily applied to $\RO(-)_{(p)}$, $\CF_b(-)_{(p)}$, and $A(-)_{(p)}^\times$.

	\section{The main results}
	
	We are now ready to prove our main theorems. The first theorem answers the question posed in \cite[Remark 4.2]{MM26}: is $\sgn_\calF\colon \CF_b(\calF) \to A(\calF)^\times$ surjective? 
	
	\begin{theorem}\label{thm:main}
		Let $\calF$ be a saturated fusion system over a $p$-group $S$. The $\calF$-restricted sign homomorphism \[\sgn_\calF\colon \CF_b(\calF) \to A(\calF)^\times\] and the $\calF$-restricted tom Dieck homomorphism \[\Theta_\calF\colon \RO(\calF) \to A(\calF)^\times\] are well-defined and surjective.
	\end{theorem}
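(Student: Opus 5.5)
Well-definedness comes first. Since $\dim$, $\sgn$ and $\Theta$ are morphisms of $p$-biset functors (the propositions of Section 1 together with \Cref{prop:sgn_is_biset_mor}), they commute with the operators $T_{[P,\varphi]^S_P}$ and $T_{[P,\on{incl}]^S_P}$. Hence they send $\calF$-stable elements to $\calF$-stable elements. By \Cref{def:stable_biset} this means $\dim_S(\RO(\calF))\subseteq \CF_b(\calF)$ and $\sgn_S(\CF_b(\calF))\subseteq A(\calF)^\times$. Since $\Theta_\calF=\sgn_\calF\circ\dim_\calF$, it then suffices to prove surjectivity of $\Theta_\calF$, because that forces surjectivity of $\sgn_\calF$ as well.

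For surjectivity, split by $p$. If $p$ is odd, then $A(\calF)^\times\subseteq A(S)^\times=\{\pm 1\}$. The trivial one-dimensional representation (which is $\calF$-stable) maps to $-1$, so we are done. Now let $p=2$, so $A(-)^\times$ is an $\bbF_2$-linear biset functor, and note that $\bbZ_{(2)}\otimes_\bbZ A(-)^\times = A(-)^\times$ because odd integers act invertibly on an $\bbF_2$-vector space. Take $u\in A(\calF)^\times$. By Tornehave--Yal\c{c}in, choose $V\in \RO(S)$ with $\Theta_S(V)=u$.

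The plan is to apply the characteristic idempotent. Consider $T_{\omega_\calF}(1\otimes V)\in \RO(S)_{(2)}$; by \Cref{prop:make_fusion_stable} it lies in $\RO(S)^\calF_{(2)}=\RO(\calF)_{(2)}$. By naturality of $\Theta$ extended $\bbZ_{(2)}$-linearly,
\[\Theta_{(2)}\big(T_{\omega_\calF}(1\otimes V)\big)=T_{\omega_\calF}(\Theta_S(V))=T_{\omega_\calF}(u)=u,\]
where the last equality again uses \Cref{prop:make_fusion_stable}, since $u$ is $\calF$-stable.

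The main obstacle is descending from $\RO(\calF)_{(2)}$ to $\RO(\calF)$. Write $T_{\omega_\calF}(1\otimes V)=\frac{1}{m}\otimes W$ with $m$ odd and $W\in \RO(S)$; since $\RO(S)$ is torsion-free, $W$ is $\calF$-stable, i.e. $W\in\RO(\calF)$. Then, computing in the $\bbF_2$-vector space $A(S)^\times$ written additively,
\[\Theta(W)=m\cdot\Theta_{(2)}\Big(\tfrac1m\otimes W\Big)=m\cdot u=u,\]
because $m$ is odd. (One should check the identification of $\Theta_{(2)}$ on $\frac1m\otimes W$ with $\frac1m\Theta(W)$ in the $\bbF_2$-space, which is exactly the statement that $\bbZ_{(2)}\otimes A(-)^\times\cong A(-)^\times$.) Thus $\Theta_\calF(W)=u$, and $\Theta_\calF$ is surjective.
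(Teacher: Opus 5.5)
Your proof is correct and follows the paper's argument essentially step for step: you lift $u$ via Tornehave--Yal\c{c}in, apply $T_{\omega_\calF}$ using naturality of $\Theta_{(2)}$ and the Reeh--Yal\c{c}in stabilization result, and then clear the odd denominator, which does not change the sign. The only differences are cosmetic. You derive well-definedness directly from the fact that biset-functor morphisms commute with the $T_{[P,\varphi]^S_P}$, where the paper cites Reeh--Yal\c{c}in. You also spell out the torsion-freeness check showing that $W$ itself is $\calF$-stable, which the paper leaves implicit.
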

	\begin{proof}
		Well-definedness follows from \cite[Lemma 4.5]{RY18}, which verifies that $\dim_\calF$ is well-defined, and well-definedness of $\sgn_\calF$ follows easily from the characterizations in \Cref{prop:sgn_is_biset_mor} and \Cref{def:stable_biset}. If $p$ is odd, then $A(S)^\times = \{\pm[S/S]\}$ and the result is straightforward, so assume $p = 2$. 
		
		It suffices to show the result for $\Theta_\calF$. First, \cite{Tor84,Y05} show the unrestricted tom Dieck homomorphism $\Theta_S$ is surjective. Choose a $\calF$-stable element $x \in A(S)^\times$, then there exists an virtual real representation $X \in \RO(S)$ with $\Theta_S(X) = x$. 
		
		Consider $X \in \RO(S)_{(p)}$ and $x \in (A(S)^\times_{(p)})^\calF$ respectively; we have a $p$-localized tom Dieck morphism of $\bbZ_{(p)}$-linear $p$-biset functors $\Theta_{(p)}\colon \RO(-)_{(p)} \to A(-)^\times_{(p)}$.  
		By biset functoriality, \[(\Theta_{(p)}\circ T_{\omega_\calF})(X) = (T_{\omega_\calF}\circ \Theta_{(p)})(X) = T_{\omega_\calF}(x) = x,\] where the last equality follows from \Cref{prop:make_fusion_stable} applied to $A(-)^\times_{(p)}$. Set $X' := T_{\omega_\calF}(X) \in \RO(S)_{(p)}$, then $X'$ is $\calF$-stable by \Cref{prop:make_fusion_stable} applied to $\RO(-)_{(p)}$, i.e., $X' \in \on{RO}(S)^\calF_{(p)} = \RO(\calF)_{(p)}$, with equality holding from \Cref{def:stable_biset}. Moreover, $\Theta_{(p)}(X') = x$, but $X'$ may not live in $\RO(\calF) \subset \RO(\calF)_{(p)}$. 
		
		We have \[X' = \sum_{i = 1}^n\frac{1}{a_i}\otimes X_i,\] with each $X_i \in \RO(S)$ and $a_i \in \bbZ_{\on{odd}}$. Let $a = \on{lcm}(a_1,\dots, a_n)$ and set $X'' := a X'$. Then $X'' \in \RO(\calF)$. Because $a$ is odd, we have \[\sgn_{(p)}(\dim_{(p)}(X'')) = \sgn_{(p)}(a\cdot \dim_{(p)}(X')) = \sgn_{(p)}(\dim_{(p)}(X')) = \Theta_{(p)}(X') = x.\] Clearly $\Theta_S \equiv \Theta_{(p)}$ restricted to $\RO(-)\subset \RO(-)_{(p)}$, so we are done. 
	\end{proof}
	
	\begin{remark}\label{rmk:endotriv_overview}
		We turn to the application of \Cref{thm:main}. Let $k$ be a field of prime characteristic $p$. Recall that a \emph{permutation} $kG$-module is the $k$-linearization of a $G$-set, and a \emph{$p$-permutation} $kG$-module is a direct summand of a permutation $kG$-module. The bounded homotopy category of $p$-permutation modules, $\catK(G;k) := \on{K}_b(\mathsf{perm}^\natural(kG))$ has a symmetric tensor product, and therefore, one can ask for its \emph{invertible objects}, the bounded chain complexes $C$ of $p$-permutation modules satisfying \[C^* \otimes_k C \simeq k[0].\] Such complexes are termed \emph{endotrivial complexes} (although this name can be misleading, as \emph{endotrivial modules}, the invertible objects of $\mathsf{stmod}(kG)$, are not necessarily endotrivial complexes) and the collection of isomorphism classes of them forms the \emph{Picard group} $\Pic(\catK(G;k))$. 
		
		These were first studied by the author in \cite{M24a} and completely classified in \cite{M24c}, although numerous open questions surrounding these complexes remain. In fact, for a $p$-group $S$, one has an explicit isomorphism $h\colon \Pic(\catK(S;k)) \xrightarrow{\sim} \CF_b(S)$, although describing an explicit inverse remains an open problem. Endotrivials play a significant role in understanding the tensor-triangular geometry of $\catK(G;k)$, see \cite{BG25, Mil26}.  
		
		As $\catK(G;k)$ is a tensor-triangulated category, one may consider its Grothendieck ring as a triangulated category. This is denoted $O(T(kG))$, and called the \emph{trivial source ring}. Equivalently, $O(T(kG))$ is the split Grothendieck ring of the additive monoidal category $\mathsf{perm}(kG)^\natural$. The image of an complex $C\in \catK(G;k)$ in $O(T(kG))$ is called its \emph{Euler characteristic} or \emph{Lefschetz invariant}, and taking the image of an endotrivial complex in $O(T(kG))$ induces the \emph{Lefschetz homomorphism:} \[\Lambda\colon \Pic(\catK(G;k)) \to O(T(kG)), \quad C \mapsto \sum_{i \in \bbZ} (-1)^i [C_i].\] In \cite{M24a}, we showed that if $p=2$ and $k$ contained a 3rd root of unity, then $\Lambda$ was not surjective for the group $A_4$. On the other hand, in \cite{M24c}, we deduced that $\Lambda$ is surjective when $G$ is a $p$-group and $k$ is any field of characteristic $p$. Finally, in \cite{MM26}, for every odd prime $p$ and field $k$ of characteristic $p$, we found examples of groups $G$ for which $\Lambda$ is not surjective, although we showed $\Lambda$ is surjective for all groups with cyclic $p$-Sylows. However, the case of $\bbF_2$ remained open. 
	\end{remark}
	
	\begin{corollary}\label{cor:thmB}
		Let $G$ be a finite group. The Lefschetz homomorphism \[\Lambda\colon \Pic(\catK(G;\bbF_2)) \to O(T(\bbF_2G)), \quad C \mapsto \sum_{i \in \bbZ} (-1)^i[C_i]\] is surjective. In particular, the following holds: if $k$ is field of characteristic $p$, then the Lefschetz homomorphism is surjective for every finite group if and only if $k = \bbF_2$.
	\end{corollary}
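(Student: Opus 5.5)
We reduce \Cref{cor:thmB} to \Cref{thm:main} through the structure of the unit group of the trivial source ring. First, describe $O(T(\bbF_2G))^\times$. Over $k=\bbF_2$ the only roots of unity of odd order in $k$ are trivial. Hence, via the Brauer-character/species description of the trivial source ring (the marks $[M] \mapsto$ Brauer characters of the Brauer quotients $M[P]$ evaluated at $p'$-elements of $N_G(P)/P$), a unit of $O(T(\bbF_2 G))$ should be determined by a $\pm1$-valued function on pairs $(P,s)$. We then need the reduction that such a unit splits as a product of an ``$\calF$-stable Burnside unit part'' and a part coming from linear characters, i.e. degree-one endotrivial complexes given by one-dimensional modules $k_\omega[0]$; over $\bbF_2$ this last part is trivial except for the sign $\pm$. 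Concretely, I would invoke the restriction map
\[O(T(\bbF_2G))^\times \to O(T(\bbF_2S))^\times \cong A(S)^\times,\]
which lands in $A(\calF_S(G))^\times$ by fusion-stability of restrictions. I would show it is injective on units whose ``$p'$-part'' is trivial, using the results of \cite{MM26} (where this reduction is set up and surjectivity is reduced to surjectivity of $\sgn_\calF$; see \cite[Remark 4.2]{MM26}).

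Second, given $u \in O(T(\bbF_2G))^\times$, multiply by $-1$ (the image of $k[1]$) if needed and project to $x := \Res^G_S u \in A(\calF)^\times$, with $\calF = \calF_S(G)$. By \Cref{thm:main}, $x = \sgn_\calF(f)$ for some $f \in \CF_b(\calF)$. The classification of endotrivial complexes in \cite{M24c} identifies $\Pic(\catK(G;k))$, modulo the torsion of one-dimensional twists, with $\calF$-stable Borel--Smith functions via the $h$-marks. So we pick an endotrivial complex $C$ of $\bbF_2G$-modules with $h$-mark function $f$. Its Lefschetz invariant restricted to $S$ is then $\sgn_S(f) = x$, because for a $p$-group the Lefschetz invariant of an endotrivial complex is $P \mapsto (-1)^{h(P)}$ in marks (as in \cite{M24c}).

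Third, $\Lambda(C)$ and $u$ agree after restriction to $S$ and both are units. Over $\bbF_2$ there are no nontrivial $p'$-root-of-unity contributions, so the injectivity statement from the first step gives $\Lambda(C) = \pm u$, and surjectivity follows. The ``in particular'' then combines this with the known counterexamples: \cite{M24a} handles $p=2$ with $k \supseteq \bbF_4$, and \cite{MM26} handles odd $p$. For $p=2$ and $k \neq \bbF_2$, the condition $k \supseteq \bbF_4$ is not automatic, so I would check that the argument of \cite{M24a} or \cite{MM26} adapts to any $k \supsetneq \bbF_2$, e.g. via a group whose $p'$-part yields units not realized by endotrivial complexes.

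The main obstacle is the first step: precisely identifying $O(T(\bbF_2G))^\times$ with $A(\calF)^\times \times \{\pm1\}$, or at least proving that a unit is determined up to sign by its restriction to $S$. I would first try to extract this from \cite{MM26}, where the question was reduced exactly to surjectivity of $\sgn_\calF$. Failing that, I would argue with species: a unit's values at $(P,s)$ with $s \neq 1$ are forced by the values at $(P,1)$, since over $\bbF_2$ each Brauer quotient contributes $\pm$ a trivial-type virtual module.
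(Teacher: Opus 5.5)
Your surjectivity argument over $\bbF_2$ is essentially the paper's. The paper invokes the remark after \cite[Theorem 4.1]{MM26}, which rests on \cite{BC23} and the h-mark classification of \cite{M24c}, to reduce to surjectivity of $\sgn_\calF$ on $\CF_b(\calF)$, and then applies \Cref{thm:main}. Your first and third steps unpack that reduction. Two corrections to the unpacking:
\begin{enumerate}
\item The real content is that each Brauer quotient of an orthogonal unit is $\pm$ a one-dimensional module, hence $\pm$ trivial over $\bbF_2$. This is supplied by \cite{BC23} and should be cited, not presented as a species heuristic.
\item Once you have this, restriction $O(T(\bbF_2G))\to A(\calF)^\times$ is injective outright. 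Since $-1=-[S/S]$ already lies in $A(\calF)^\times$, there is no extra $\{\pm1\}$ factor and no ``$\Lambda(C)=\pm u$'' ambiguity.
\end{enumerate}

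The genuine gap is the ``only if'' direction in characteristic $2$, which you leave as ``check that the argument adapts''. Your suspicion is justified. By \Cref{rmk:endotriv_overview}, the $A_4$ example of \cite{M24a} needs a primitive cube root of unity, yet the paper's proof cites it for every $k\supsetneq\bbF_2$. However, no adaptation can cover all such $k$, because the claim fails in that generality.

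Suppose $\bbF_2$ is algebraically closed in $k$, e.g.\ $k=\bbF_2(t)$.
\begin{enumerate}
\item Then $k\otimes_{\bbF_2}E$ is a field for every finite field $E\supseteq\bbF_2$. Hence $k\otimes_{\bbF_2}\on{End}_{\bbF_2G}(M)$ is local for every indecomposable $p$-permutation $\bbF_2G$-module $M$.
\item With Krull--Schmidt and Noether--Deuring, base change is then an isomorphism $T(\bbF_2G)\cong T(kG)$ compatible with duality.
\item Base change also sends endotrivial complexes to endotrivial complexes and commutes with $\Lambda$.
\end{enumerate}
So surjectivity over $\bbF_2$ transfers to $k$ for every finite group $G$. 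The same reasoning shows $A_4$ is no counterexample over $\bbF_{2^n}$ with $n$ odd: only the residue fields $\bbF_2$ and $\bbF_4$ occur, and $\bbF_{2^n}\otimes_{\bbF_2}\bbF_4$ is a field.

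Thus the ``only if'' statement needs an additional hypothesis, at least that $k$ contain a nontrivial root of unity. Fields such as $\bbF_8$ would still require a new counterexample group. Neither your proposal nor the paper's proof supplies one.
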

	\begin{proof}
		Surjectivity of $\Lambda$ for $\bbF_2 G$ follows directly from \Cref{thm:main} and the remark after \cite[Theorem 4.1]{MM26}, which verifies that $\Lambda$ is surjective if and only if the sign homomorphism $\CF_b(G)^{\calF_S(G)} \to (A(S)^\times)^{\calF_S(G)}$ is surjective. We note that this fact depends on a description of $O(T(kG))$ due to Boltje--Carman \cite{BC23} which is compatible with the Lefschetz homomorphism.
		
		For the latter statement, it suffices to show that for any other field $k\neq \bbF_2$ of positive characteristic, the Lefschetz homomorphism is not surjective for some finite group $G$. For fields $k$ of odd characteristic, such groups are constructed in \cite[Theorem 5.10]{MM26}, and for fields $k \supsetneq \bbF_2$ of characteristic 2, \cite[Example 7.7]{M24a} verifies the Lefschetz homomorphism is not surjective for $A_4$. 
	\end{proof}
	
	\begin{remark}
		In \cite[Theorem 4.5]{MM26}, we concluded \Cref{cor:thmB} holds for fusion systems $\calF_S(G)$ of finite groups $G$ with $2$-Sylow $S$ for which fusion is controlled by the normalizer $N_G(S)$. Equivalently, \Cref{thm:main} holds for fusion systems $\calF_S(G)$. This was shown as follows: given $G$-stable unit $x \in A(\calF_S(G))^\times$, there exists a $f \in \CF_b(S)$ with $\sgn_S(f) = x$. Then, the trace sum \[\Res^{N_G(S)}_S\on{tr}^{N_G(S)}_{S} (f) := \sum_{g \in [N_G(S)/S]}{}^g f\] is $G$-stable, equivalently $\calF_S(G)$-stable, and satisfies $\sgn_S(f') = x$. 
		
		In fact, modulo scaling, this is exactly what the action of the characteristic idempotent $\omega_{\calF_S(G)}$ does to an element $f \in \CF_b(S)_{(p)}$, and therefore, in the proof of \Cref{thm:main}, we obtain the same element . Indeed, one may verify that the element \[\omega := \frac{1}{[N_G(S):S]}\cdot \sum_{g\in [N_G(S)/S]} [S, c_g]^S_S \in A(S,S)_{(p)}\] is idempotent and satisfies the Linckelmann--Webb conditions. By uniqueness of characteristic idempotents \cite[Theorem C]{RS13}, we have $\omega = \omega_{\calF_S(G)}$. Then, one can check $T_{[S, c_G]}(f) = {}^gf$ in $\CF_b(S)_{(p)}$, therefore \[T_\omega (f) = \frac{1}{[N_G(S):S]} \cdot \sum_{g\in [N_G(S)/S]} {}^gf \in \on{CF}_b(\calF)_{(p)}.\] This $p$-local element in the proof of \Cref{thm:main} is scaled to the original element \[\Res^{N_G(S)}_S \on{tr}^{N_G(S)}_S(f)= [N_G(S):S]T_\omega(f) \in \CF_b(\calF).\]
	\end{remark}
	
	\bibliography{bib}
	\bibliographystyle{alpha}
	
\end{document}